\def\Ddots{\mathinner{\mkern1mu\raise\p@
\vbox{\kern7\p@\hbox{.}}\mkern2mu
\raise4\p@\hbox{.}\mkern2mu\raise7\p@\hbox{.}\mkern1mu}}
\def\Xint#1{\mathchoice
{\XXint\displaystyle\textstyle{#1}}%
{\XXint\textstyle\scriptstyle{#1}}%
{\XXint\scriptstyle\scriptscriptstyle{#1}}%
{\XXint\scriptscriptstyle\scriptscriptstyle{#1}}%
\!\int}
\def\XXint#1#2#3{{\setbox0=\hbox{$#1{#2#3}{\int}$}
\vcenter{\hbox{$#2#3$}}\kern-.5\wd0}}
\def\dashint{\Xint-}
\newtheorem{theorem}{Theorem}[section]
\theoremstyle{definition}
\def\al{{\alpha}}
\def\R{\mathbb R}
\def\Z{\mathbb Z}
\def\bey{\begin{eqnarray*}}
\def\eey{\end{eqnarray*}}
\def\D{{\mathscr D}}
\def\Sp{{\mathscr S}}
\def\B{{\mathscr B}}
\begin{document}

\title[Sharp Weighted Bounds]{Sharp weighted bounds without testing or extrapolation}
\author{Kabe Moen}

%}

 \address{Department of Mathematics, University of Alabama, Tuscaloosa, AL 35487-0350 }
\email{kmoen@as.ua.edu}

\subjclass{42B20, 42B25, 42B35}

\keywords{Calder\'on-Zygmund operators, Riesz potentials, $A_p$ weights}

\thanks{The author is partially supported by the NSF under grant 1201504}
 
%\setpagewiselinenumbers
%\linenumbers
\maketitle

\begin{abstract} We give a short proof of the sharp weighted bound for sparse operators that holds for all $p$, $1<p<\infty$.  By recent developments this implies the bounds hold for any Calder\'on-Zygmund operator.  The novelty of our approach is that we avoid two techniques that are present in  other proofs: two weight inequalities and extrapolation.  Our techniques are applicable to fractional integral operators as well.

\end{abstract}

%\section{Introduction and statements of main results}
\section{Introduction} In this note we give an accessible proof of the sharp weighted bound for Calder\'on-Zygmund operators of Hyt\"onen \cite{Hy2}: 
\begin{equation}\label{sharpCZ}\|T\|_{\B(L^p(w))}\leq c_{p,T} [w]_{A_p}^{\max(1,\frac{p'}{p})}\qquad 1<p<\infty.\end{equation}
Inequality \eqref{sharpCZ} was originally termed the $A_2$ theorem because if it held for $p=2$ then it held for all $p$, $1<p<\infty$; a fact that followed from a sharp version of Rubio de Francia's extrapolation theorem.  Most known proofs of \eqref{sharpCZ} follow this paradigm, beginning with $p=2$ and then extrapolating \cite{CMP,Hy2,HP,LPR,Le6,Pet2,PV}.  Of these proofs, some of them use two weight testing inequalities \cite{Hy2,HP,LPR} while others do not \cite{CMP,Pet2,PV}. Some proofs avoid extrapolation, but use two weight testing inequalities \cite{HL,HLMORSU,HLP}.  Our proof of \eqref{sharpCZ} holds for all $1<p<\infty$ and does not use two weight testing inequalities or extrapolation.  Moreover, we do not use Bellman functions as the original proofs in \cite{Pet2,PV} do.  Our techniques can be thought of as an extension of those in \cite{CMP} to $p\not=2$ and were inspired by the mixed estimates in \cite{Le3}.  The methods also apply to fractional integral operators; as a result, we are able to give a new proof of the bound from \cite{LMPT},
\begin{equation}\label{sharpFrac}\|I_\al\|_{\B(L^p(w^p),L^q(w^q))}\leq c_{p,\al} [w]_{A_{p,q}}^{(1-\frac{\al}{n})\max(1,\frac{p'}{q})},\end{equation}
for a range of $p$ and $q$ that satisfy $\frac1q=\frac1p-\frac{\al}{n}$.  Each of the proofs uses the same tools: dyadic operators, sparse families of cubes, and universal maximal function bounds.  
\section{Preliminaries}

We will consider two central families of integral operators in harmonic analysis.  A Calder\'on-Zygmund operator will be an $L^2(\R^n)$ bounded linear operator, associated to a standard kernel $K$ (see Grafakos \cite[p. 171]{GrMo}), that has the representation
$$Tf(x):=\int_{\R^n}K(x,y)f(y)\,dy$$
for $f\in L^2_c(\R^n)$ and $x\notin\text{supp}\ f$.
The family of fractional integral operators or Riesz potentials is defined for $0<\al<n$, by
$$I_\al f(x):=\int_{\R^n}\frac{f(y)}{|x-y|^{n-\al}}\,dy.$$

Calder\'on-Zygmund operators are bounded on $L^p(w)$ for $1<p<\infty$ when $w\in A_p$:
$$[w]_{A_p}:=\sup_{Q}\left(\dashint_Qw(x)\,dx\right)\left(\dashint_Q w(x)^{1-p'}\,dx\right)^{p-1}<\infty.$$
Meanwhile, when $1<p<n/\al$ and $1/q=1/p-\al/n$, the fractional integral operator, $I_\al$, maps
$L^p(w^p)$ into $L^q(w^q)$ exactly when $w\in A_{p,q}$:
$$[w]_{A_{p,q}}:=\sup_Q\left(\dashint_Qw(x)^q\,dx\right)\left(\dashint_Q w(x)^{-p'}\,dx\right)^{\frac{q}{p'}}<\infty.$$

A {\it dyadic grid}, denoted $\D$, is a collection of cubes in $\R^n$ with the following properties:
\begin{enumerate}[(i)] 
\item each $Q\in \D$ satisfies $|Q|=2^{nk}$ for some $k\in \Z$; 
\item if $Q,P\in \D$ then $Q\cap P=\varnothing,P,$ or $Q$;
\item for each $k\in \Z$, the family $\D_k=\{Q\in \D:|Q|=2^{nk}\}$ forms a partition of $\R^n$.
\end{enumerate}

We say that a family of dyadic cubes $\Sp\subset \D$ is {\it sparse} if for each $Q\in \Sp$, 
$$\Big|\bigcup_{\substack{Q'\in \Sp\\ Q'\subsetneq Q}} Q'\Big|\leq \frac12|Q|.$$
Given a sparse family, $\Sp$, if we define
$$E(Q):=Q\, \backslash \bigcup_{\substack{Q'\in \Sp \\ Q'\subsetneq Q}} Q',$$
then the family $\{E(Q)\}_{Q\in \Sp}$ is pairwise disjoint,  $E(Q)\subset Q$, and $|Q|\leq 2|E(Q)|$.  Sparse families have long been used in Calder\'on-Zygmund theory, our definition can be found in \cite{HN}.

We now introduce discrete versions of $T$ and $I_\al$ using sparse families of cubes.  We begin with the simpler operator $I_\al$.  Given a dyadic grid $\D$ and $0<\al<n$, define the dyadic fractional integral operator as
$$I^\D_\al f:=\sum_{Q\in \D} |Q|^{\frac{\al}{n}}\dashint_Q f\,dx\cdot \chi_Q.$$
In \cite{CM} it is proven that there exists a finite collection of dyadic grids $\D^1,\ldots, \D^N$ such that
\begin{equation} \label{dyadictononI} I_\al f\simeq \max_{1\leq t\leq N} I_\al^{\D^t}f.\end{equation}
We may further decompose the operator $I_\al^\D$ using sparse families.  If $f\geq 0$ is bounded with compact support, then there exists a sparse family depending on $f$, $\Sp=\Sp(f)$, such that
\begin{equation}\label{sparseI} I^\D_\al f\simeq \sum_{Q\in \Sp}|Q|^{\frac{\al}{n}}\dashint_Q  f\,dx\cdot \chi_Q:=I_\al^\Sp f\end{equation}
where the implicit constants depend on the dimension and $\al$, but not on $f$ or $\Sp$.  The equivalence \eqref{sparseI} can be found in \cite{CM,LMPT,Per2,SW}.   By combining \eqref{dyadictononI} with \eqref{sparseI} we obtain the following theorem.
\begin{theorem} \label{dyadicbound} If $X$ and $Y$ are Banach function spaces (see \cite[Chapter 1]{BS}) then there exists a constant $c_\al$ such that
$$ \|I_\al\|_{\B(X,Y)}\leq c_\al \sup_{\Sp\subset \D}\|I_\al^\Sp\|_{\B(X,Y)}.$$
\end{theorem}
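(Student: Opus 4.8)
The plan is to obtain the theorem as a purely formal consequence of the pointwise comparisons \eqref{dyadictononI} and \eqref{sparseI}, using only two of the defining axioms of a Banach function norm: the \emph{lattice property} ($0\le g\le h$ a.e.\ implies $\|g\|\le\|h\|$) and the \emph{Fatou property} ($0\le g_k\uparrow g$ a.e.\ implies $\|g_k\|\uparrow\|g\|$). First I would reduce to the case of a nonnegative, bounded, compactly supported function. Given $f$ with $\|f\|_X<\infty$, the pointwise inequality $|I_\al f|\le I_\al|f|$ (valid wherever the defining integral converges absolutely) together with the lattice property and $\||f|\|_X=\|f\|_X$ shows it suffices to estimate $\|I_\al g\|_Y$ for $g\ge0$. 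Truncating, $g_k:=\min(g,k)\chi_{B(0,k)}\uparrow g$, monotone convergence inside the integral gives $I_\al g_k\uparrow I_\al g$ pointwise, and the Fatou property then transfers any bound proven for the $g_k$ to $g$ itself. This is what replaces the density argument one would ordinarily make, and it is needed precisely because bounded compactly supported functions need not be dense in $X$ in a manner compatible with the $Y$-norm.

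For $g$ bounded with compact support I would chain the two decompositions. By \eqref{dyadictononI}, $I_\al g\lesssim\max_{1\le t\le N}I_\al^{\D^t}g\le\sum_{t=1}^N I_\al^{\D^t}g$, so the triangle inequality for $\|\cdot\|_Y$ gives $\|I_\al g\|_Y\lesssim\sum_{t=1}^N\|I_\al^{\D^t}g\|_Y$. For each fixed $t$, \eqref{sparseI} supplies a sparse family $\Sp_t=\Sp_t(g)\subset\D^t$ with $I_\al^{\D^t}g\lesssim I_\al^{\Sp_t}g$, hence $\|I_\al^{\D^t}g\|_Y\lesssim\|I_\al^{\Sp_t}g\|_Y\le\big(\sup_{\Sp}\|I_\al^\Sp\|_{\B(X,Y)}\big)\|g\|_X$; here the supremum ranges over all dyadic grids and all sparse families inside them, so it is immaterial which of the finitely many grids $\D^1,\dots,\D^N$ happens to carry $\Sp_t$---only the operator norm of $I_\al^{\Sp_t}$ enters. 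Summing over the $N$ values of $t$ and unwinding the reduction above yields $\|I_\al f\|_Y\le c_\al\,\sup_{\Sp}\|I_\al^\Sp\|_{\B(X,Y)}\,\|f\|_X$, with $c_\al$ depending only on $n$ and $\al$ through the implicit constants of \eqref{dyadictononI} and \eqref{sparseI} (ultimately through \cite{CM}).

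I do not expect a genuine obstacle here: once \eqref{dyadictononI} and \eqref{sparseI} are granted, the argument is bookkeeping built from the lattice property, the triangle inequality, and the trivial bound $\max_t a_t\le\sum_t a_t$. The only points demanding attention are that the implicit constants in those two equivalences are genuinely independent of $g$ and of the associated sparse families---which is how they are stated---and the Fatou-property step, which is what lets one pass from bounded compactly supported functions to arbitrary $f\in X$ without any density hypothesis on $X$.
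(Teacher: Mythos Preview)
Your proposal is correct and matches the paper's own approach: the paper does not give a separate proof of this theorem but simply states that it is obtained ``by combining \eqref{dyadictononI} with \eqref{sparseI},'' and your argument is precisely a careful execution of that combination, with the lattice and Fatou properties of Banach function norms supplying the reduction to nonnegative bounded compactly supported data.
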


To define a dyadic version of a Calder\'on-Zygmund operator it is not enough to simply take $\al=0$ in the definition of $I^\D_\al$, because the defining series will not always converge.  Dyadic versions of Calder\'on-Zygmund operators must capture the cancelation of the operator $T$; one way to accomplish this is by using the so called Haar shift operators \cite{Hy2}.  Another way is to use families of sparse cubes.  Let $\Sp\subset \D$ be a sparse family and define sparse Calder\'on-Zygmund operator
$$T^{\Sp}f:=\sum_{Q\in \Sp}\dashint_Q f\,dx\cdot\chi_Q.$$
Lerner \cite{Le5,Le6} proved the corresponding version of Theorem \ref{dyadicbound} for Calder\'on-Zygmund operators.
\begin{theorem} \label{dyadicboundCZ} Suppose $X$ and $Y$ are Banach function spaces on $\R^n$ and $T$ is a Calder\'on-Zygmund operator, then there exists a constant $c_T$
$$\|T\|_{\B(X,Y)}\leq c_T\sup_{\Sp\subset \D}\|T^\Sp\|_{\B(X,Y)}.$$
\end{theorem}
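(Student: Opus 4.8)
The plan is to deduce Theorem~\ref{dyadicboundCZ} from the \emph{pointwise} sparse domination of $T$ and then to use nothing but the lattice structure of the spaces. It suffices to estimate $\|Tf\|_Y$ for $f$ bounded with compact support, and splitting $f$ into real/imaginary and positive/negative parts we may assume $f\ge 0$; the general $f\in X$ is recovered afterwards by the usual density and extension argument (the asserted inequality being vacuous when its right-hand side is infinite). The key input I would invoke is Lerner's theorem \cite{Le5,Le6}: there exist a dimensional constant $c_n$, dyadic grids $\D^1,\dots,\D^{3^n}$, and sparse families $\Sp_t\subset\D^t$, all depending on $f$, such that
$$|Tf(x)|\le c_n\sum_{t=1}^{3^n}\sum_{Q\in\Sp_t}\Big(\dashint_Q f\,dy\Big)\chi_Q(x)=c_n\sum_{t=1}^{3^n}T^{\Sp_t}f(x)\qquad\text{for a.e. }x\in\R^n.$$

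To establish this domination I would run a Calder\'on--Zygmund stopping-time argument on cubes exhausting $\R^n$. Fix a dyadic grid $\D$ and a cube $Q_0\in\D$ with $\text{supp}\,f\subset Q_0$. Starting from $Q_0$, declare the stopping descendants of a cube $F$ to be the maximal dyadic cubes $Q\subsetneq F$ on which either $\dashint_Q f>2\dashint_F f$ or $M_T(f\chi_{3F})>c_n\dashint_{3F}f$, where $M_T$ is the grand maximal truncation $M_Tg(x)=\sup_{Q'\ni x}\operatorname*{ess\,sup}_{\xi\in Q'}\bigl|T(g\chi_{\R^n\setminus 3Q'})(\xi)\bigr|$. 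The weak $(1,1)$ and $L^2$ bounds for $M_T$ — which follow from the standard kernel estimates for $T$ together with its $L^2$, hence weak $(1,1)$, boundedness — force the union of the stopping descendants of $F$ to occupy at most $\tfrac12|F|$, so the collection of all stopping cubes is sparse. On each stopping region $E(F)$ one bounds $|Tf|$ by a multiple of $\dashint_{3F}f$ plus the contribution of the next generation, and telescoping over generations yields the displayed bound, save that $3Q$ appears in place of $Q$ inside the average. This is repaired by the standard ``three lattices'' observation: every dilate $3Q$ of a cube $Q$ in a grid $\D$ is contained in a cube $Q'$ belonging to one of $3^n$ fixed translates of $\D$, with $|Q'|\le c_n|Q|$, a correspondence that also transfers sparseness. (Alternatively the same conclusion follows from Lerner's local mean oscillation formula after estimating the local oscillation $\omega_\lambda(Tf;Q)$ on a cube by $c_n\dashint_{3Q}f$ via the size and smoothness of $K$; I find the stopping-time route more transparent.)

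With the pointwise bound in hand the theorem is immediate from two elementary properties of a Banach function space $Z$ (see \cite[Ch.~1]{BS}): monotonicity of the norm, $0\le g_1\le g_2\Rightarrow\|g_1\|_Z\le\|g_2\|_Z$, and $\| |g| \|_Z=\|g\|_Z$. Since each $T^{\Sp_t}$ is precisely of the form occurring in the definition of a sparse Calder\'on--Zygmund operator,
$$\|Tf\|_Y=\bigl\||Tf|\bigr\|_Y\le c_n\sum_{t=1}^{3^n}\|T^{\Sp_t}f\|_Y\le c_n\sum_{t=1}^{3^n}\|T^{\Sp_t}\|_{\B(X,Y)}\|f\|_X\le c_n\,3^n\Big(\sup_{\Sp\subset\D}\|T^{\Sp}\|_{\B(X,Y)}\Big)\|f\|_X,$$
and taking the supremum over all admissible $f$ gives the claim with $c_T=c_n3^n$.

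The main obstacle is the pointwise sparse domination, and within it the genuinely analytic steps: the weak $(1,1)$ (equivalently, local oscillation) estimate for $T$ that forces the stopping cubes to be sparse — this is exactly where the size and regularity of the standard kernel enter — and the bookkeeping of the finitely many shifted dyadic grids needed to replace averages over $3Q$ by averages over honest dyadic cubes without destroying sparseness. Everything after the domination is purely formal, which is why the result can be stated for an arbitrary pair of Banach function spaces.
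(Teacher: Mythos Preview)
The paper does not supply its own proof here; it simply attributes the theorem to Lerner \cite{Le5,Le6}. Your sketch is a valid route to the same conclusion, but the \emph{pointwise} inequality $|Tf|\le c_n\sum_t T^{\Sp_t}f$ you invoke is not what those papers contain. In \cite{Le5,Le6} Lerner works via his local mean oscillation decomposition: for bounded compactly supported $f$ one estimates $\omega_\lambda(Tf;Q)\lesssim\dashint_{3Q}|f|$, and the decomposition formula then delivers, for every Banach function space $Y$, the norm inequality $\|Tf\|_Y\le c\sum_t\|T^{\Sp_t}|f|\|_Y$ directly, without passing through any a.e.\ bound. The stopping-time construction with the grand maximal truncation $M_T$ that you outline, and the resulting pointwise sparse domination, appeared only later (Conde-Alonso--Rey, Lerner--Nazarov, Lacey, and Lerner's 2016 note). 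Thus your parenthetical ``alternative'' is in fact the argument the paper is citing, while your main line is a subsequent strengthening. Both routes prove the theorem; the pointwise one is cleaner and needs only monotonicity of $\|\cdot\|_Y$, whereas the local-oscillation route is the historically correct content of \cite{Le5,Le6}.
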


Finally we will need one more tool: universal maximal function bounds.  Given a Borel measure $\mu$ on $\R^n$ define the universal fractional maximal operator
$$M^\D_{\al,\mu}f(x):=\sup_{Q\in \D} \frac{1}{\mu(Q)^{1-\frac{\al}{n}}}\int_Q |f|\,d\mu \cdot \chi_Q(x) \qquad 0\leq \al<n.$$
When $\al=0$ we simply write $M_\mu^\D=M_{0,\mu}^\D$.  We have the following theorem concerning the boundedness of $M_{\al,\mu}^\D$.  
\begin{theorem}\label{weightedmax} If $0\leq \al<n$, $1<p\leq \frac{n}{\al}$, and $\frac{1}{q}=\frac{1}{p}-\frac{\al}{n}$, then 
$$\|M^\D_{\al,\mu}f\|_{L^q(\mu)}\leq \Big(1+\frac{p'}{q}\Big)^{1-\frac{\al}{n}}\|f\|_{L^p(\mu)}.$$
\end{theorem}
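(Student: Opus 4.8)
The plan is to reduce everything to a single distributional estimate for $g:=M^\D_{\al,\mu}f$ and then upgrade it to the strong $L^p(\mu)\to L^q(\mu)$ bound by a layer-cake computation refined with two applications of H\"older's inequality, closing with an absorption step that recovers the sharp constant. Throughout write $\theta:=1-\tfrac{\al}{n}$; the hypotheses force $\theta\in(0,1]$, $\tfrac1\theta\ge1$, and the identity $\theta q=1+\tfrac{q}{p'}$ (equivalently $q-\tfrac1\theta=\tfrac{q^2}{p'+q}>0$), which I will use repeatedly. After the usual reductions --- one may take $f\ge0$ bounded with compact support and, by monotone convergence, restrict the supremum defining $M^\D_{\al,\mu}$ to dyadic cubes contained in a fixed cube $Q_0$ with $\mu(Q_0)<\infty$ --- the function $g$ is bounded and supported in $Q_0$, so $\|g\|_{L^q(\mu)}<\infty$ and the absorption below is legitimate; one lets $Q_0\uparrow\R^n$ at the very end.

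The key step is the distributional estimate
$$ t\,\mu(\{g>t\})^{\theta}\ \le\ \int_{\{g>t\}}f\,d\mu,\qquad t>0. $$
To prove it I decompose $\{g>t\}$ into the maximal dyadic cubes $\{Q_j\}$ (inside $Q_0$) for which $\mu(Q_j)^{\theta}<t^{-1}\int_{Q_j}f\,d\mu$; these are pairwise disjoint with union $\{g>t\}$. Raising the defining inequality to the power $\tfrac1\theta$ gives $\mu(Q_j)\le t^{-1/\theta}\big(\int_{Q_j}f\,d\mu\big)^{1/\theta}$, and since $\tfrac1\theta\ge1$ the elementary inequality $\sum_j a_j^{1/\theta}\le\big(\sum_j a_j\big)^{1/\theta}$ together with disjointness yields the claim. (When $\al=0$, i.e.\ $\theta=1$, this is the standard refined weak-type bound for the dyadic maximal operator.)

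Now put $\phi(t):=\mu(\{g>t\})$. Splitting $\phi=\phi^{\theta}\cdot\phi^{1-\theta}$ and feeding the distributional estimate into the factor $\phi^{\theta}$, the layer-cake formula and Tonelli give
$$ \|g\|_{L^q(\mu)}^q=q\int_0^\infty t^{q-1}\phi(t)\,dt\ \le\ q\int_{\R^n}f(x)\Big(\int_0^{g(x)}t^{q-2}\phi(t)^{1-\theta}\,dt\Big)\,d\mu(x). $$
To estimate the inner integral I apply H\"older in $t$ with exponents $\tfrac1{1-\theta}$ and $\tfrac1\theta$, using the split $t^{q-2}\phi^{1-\theta}=\big(t^{q-1}\phi\big)^{1-\theta}\big(t^{q-1-1/\theta}\big)^{\theta}$: since $q>\tfrac1\theta$ the second factor integrates to $g(x)^{q-1/\theta}/(q-\tfrac1\theta)$, the first is at most $\|g\|_{L^q(\mu)}^q/q$, and (using $\theta(q-\tfrac1\theta)=\tfrac{q}{p'}$) the inner integral is at most $\big(\|g\|_{L^q(\mu)}^q/q\big)^{1-\theta}g(x)^{q/p'}/(q-\tfrac1\theta)^{\theta}$. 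Inserting this and applying H\"older in $x$ with exponents $p,p'$ gives
$$ \|g\|_{L^q(\mu)}^q\ \le\ \Big(\tfrac{q}{q-1/\theta}\Big)^{\theta}\,\|g\|_{L^q(\mu)}^{q(1-\theta)}\,\|f\|_{L^p(\mu)}\,\|g\|_{L^q(\mu)}^{q/p'}. $$
Since $q(1-\theta)+\tfrac{q}{p'}=q-1$ and $\tfrac{q}{q-1/\theta}=1+\tfrac{p'}{q}$, dividing by the finite nonzero quantity $\|g\|_{L^q(\mu)}^{q-1}$ yields $\|g\|_{L^q(\mu)}\le\big(1+\tfrac{p'}{q}\big)^{\theta}\|f\|_{L^p(\mu)}$, which is exactly the asserted bound.

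The computations themselves are routine; the part that needs care is the bookkeeping in the last lines, where the powers of $\|g\|_{L^q(\mu)}$ and the numerical constants must collapse precisely to $\big(1+\tfrac{p'}{q}\big)^{1-\al/n}$ --- this is what forces the particular H\"older split and the exponent identity above. The single genuinely substantive ingredient is the distributional estimate, whose passage from the cube-by-cube inequality to a clean bound on the whole level set uses $\tfrac1\theta\ge1$ essentially; for $\al=0$ the whole scheme degenerates to the classical proof giving the bound $p'$ for the dyadic maximal operator on $L^p(\mu)$. The remaining technical issue --- ensuring $\|g\|_{L^q(\mu)}<\infty$ so the absorption is valid --- is why one first truncates to cubes inside $Q_0$, and it needs only that $\mu$ be finite on cubes, to which one reduces first if $\mu$ is merely Borel.
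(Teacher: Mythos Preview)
Your proof is correct and follows essentially the same route as the paper's: both establish the refined weak-type bound $\mu(\{g>t\})\le\big(t^{-1}\int_{\{g>t\}}|f|\,d\mu\big)^{n/(n-\al)}$, feed it into the layer-cake formula, and close with H\"older in $x$ and an absorption step to recover the constant $(1+p'/q)^{1-\al/n}$. The only technical difference is in the interchange of integrals: the paper applies the distributional estimate in full and then invokes Minkowski's integral inequality with exponent $q_0=n/(n-\al)$, whereas you apply it only to the factor $\phi^{\theta}$, swap by Tonelli, and then use a H\"older step in $t$ to control the leftover $\phi^{1-\theta}$ --- the two manoeuvres are equivalent and produce the identical constant (and your truncation to cubes inside $Q_0$ makes the absorption step rigorous, which the paper leaves implicit).
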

Before we prove Theorem \ref{weightedmax} we note that the boundedness $M^\D_{\al,\mu}$ from $L^p(\mu)$ to $L^q(\mu)$ was proven in \cite{M}.  However, the constant 
$$\Big(1+\frac{p'}{q}\Big)^{1-\frac{\al}{n}}=(p')^{1-\frac{\al}{n}}\Big(1-\frac{\al}{n}\Big)^{1-\frac{\al}{n}}$$  
seems to be new.  When $\al=0$ we get the well known sharp bound 
$$\|M_\mu^\D f\|_{L^p(\mu)}\leq p'\|f\|_{L^p(\mu)}.$$
We do not know if the constant in Theorem \ref{weightedmax} is sharp for $\al>0$.
\begin{proof} By the standard properties of dyadic cubes we get the inequality
$$\mu(\{x:M^\D_{\al,\mu}f(x)>\lambda\})\leq \left(\frac{1}{\lambda}\int_{\{M_{\al,\mu}^\D f>\lambda\}}|f(x)|\,d\mu(x)\right)^{\frac{n}{n-\al}}.$$
Let $q_0=\frac{n}{n-\al}$ and note that if $q$ is defined as in the statement of Theorem \ref{weightedmax}, then $q>q_0$.  We have
\begin{align*}
\int_{\R^n}M_{\al,\mu}^\D f(x)^q\,d\mu&=q\int_0^\infty \lambda^{q-1}\mu(\{M^\D_{\al,\mu}f(x)>\lambda\})\,d\lambda\\
&\leq q\int_0^\infty \lambda^{q-1} \left(\frac{1}{\lambda}\int_{\{M_{\al,\mu}^\D f(x)>\lambda\}}|f(x)|\,d\mu(x)\right)^{q_0}\,d\lambda\\
&\leq q\left(\int_{\R^n}|f(x)|\Big(\int_0^{M^\D_{\al,\mu}f(x)} \lambda^{q-q_0-1}\,d\lambda\Big)^{1/q_0} \,d\mu(x)\right)^{q_0}\\
&= \frac{q}{q-q_0}\left(\int_{\R^n}|f(x)|M^\D_{\al,\mu}f(x)^{\frac{q}{p'}}\,d\mu(x)\right)^{q_0}\\
&\leq \frac{q}{q-q_0} \|f\|_{L^p(\mu)}^{q_0}\|M_{\al,\mu}^\D f\|_{L^q(\mu)}^{\frac{qq_0}{p'}}
\end{align*}
where in the second inequality we used Minkowski's integral inequality and H\"older's inequality in the last.  Using the fact that $\frac{q}{q-q_0}=1+\frac{p'}{q}$, we see that this yields the desired inequality.
\end{proof}

\section{Main Results}
We now prove rather precise weighted estimates for the sparse operators $T^\Sp$ and $I_\al^\Sp$.  By the Theorems \ref{dyadicbound} and \ref{dyadicboundCZ} we see that these bounds will imply inequalities \eqref{sharpCZ} and \eqref{sharpFrac}.  At the heart of our proof we will simply use the constant $[w]_{A_p}$ in the form of its definition:
$$[w]_{A_p}=\sup_Q\left(\dashint_Q w\,dx\right)\left(\dashint_Q w^{1-p'}\,dx\right)^{p-1}=\sup_Q\frac{w(Q)\sigma(Q)^{p-1}}{|Q|^p}$$
where $\sigma=w^{1-p'}$.  We also use the properties of the family $\{E(Q)\}_{Q\in \Sp}$: disjointness, $E(Q)\subset Q$, and $|Q|\leq 2|E(Q)|$. Our main theorem is the following.
\begin{theorem} \label{MainThmCZ} Suppose $\D$ is a dyadic grid, $\Sp\subset \D$ is a sparse family, $1<p<\infty$, and $w\in A_p$.  Then the following estimate holds 
%$$\|L^\Sp\|_{\B(L^p(w))}\leq p'2^{\frac{p'}{p}}[w]_{A_p}^{\frac{p'}{p}}$$
$$\|T^\Sp\|_{\B(L^p(w))}\leq c_p[w]_{A_p}^{\max(1,\frac{p'}{p})}$$
where
$$c_p=pp' 2^{\max(\frac{p}{p'},\frac{p'}{p})}.$$
\end{theorem}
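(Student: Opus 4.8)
The plan is to dualize, reduce to the range $p\ge 2$, and then run a short chain of elementary inequalities built from the $A_p$ condition, the sparseness bound $|Q|\le 2|E(Q)|$, H\"older's inequality, and the universal maximal bounds of Theorem~\ref{weightedmax}.

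Write $\sigma=w^{1-p'}$, so that $\sigma^{p-1}w=1$ and $\sigma^{p}w=\sigma$; the latter says that $f\mapsto f\sigma$ is an isometry from $L^p(\sigma)$ onto $L^p(w)$. Using the pairing $(F,G)\mapsto\int FGw\,dx$, under which $(L^p(w))^*=L^{p'}(w)$, and the fact that $T^\Sp$ has nonnegative kernel, the estimate $\|T^\Sp\|_{\B(L^p(w))}\le c_p[w]_{A_p}^{\max(1,p'/p)}$ is equivalent to
\[
\mathcal I(f,g):=\sum_{Q\in\Sp}\frac{\sigma(Q)\,w(Q)}{|Q|}\,\langle f\rangle^{\sigma}_Q\,\langle g\rangle^{w}_Q\ \le\ c_p[w]_{A_p}^{\max(1,p'/p)}\,\|f\|_{L^p(\sigma)}\,\|g\|_{L^{p'}(w)}
\]
for all $f,g\ge 0$, where $\langle f\rangle^{\sigma}_Q=\sigma(Q)^{-1}\int_Q f\,d\sigma$ and similarly for $\langle g\rangle^{w}_Q$ (one may first truncate $\Sp$ to a finite family so that all sums are finite, and remove the truncation at the end by monotone convergence). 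Next I would observe that $T^\Sp$ is self-adjoint for the Lebesgue pairing, under which $(L^p(w))^*=L^{p'}(\sigma)$, so $\|T^\Sp\|_{\B(L^p(w))}=\|T^\Sp\|_{\B(L^{p'}(\sigma))}$. Since $\sigma\in A_{p'}$ with $[\sigma]_{A_{p'}}=[w]_{A_p}^{p'-1}$, since $c_{p'}=c_p$ (the exponent $\max(\frac{p}{p'},\frac{p'}{p})$ being symmetric), and since for $1<p<2$ one has $\max(1,\frac{p'}{p})=\frac{p'}{p}=p'-1$ while $\max(1,\frac{(p')'}{p'})=1$, the case $1<p<2$ follows from the case $p\ge 2$. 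Thus it suffices to prove $\mathcal I(f,g)\le p\,p'\,2^{p/p'}\,[w]_{A_p}\,\|f\|_{L^p(\sigma)}\|g\|_{L^{p'}(w)}$ when $p\ge 2$.

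For $p\ge 2$ the key point is the pointwise estimate on the coefficients
\[
\frac{\sigma(Q)\,w(Q)}{|Q|}\ \le\ 2^{p-1}\,[w]_{A_p}\,\sigma(E(Q))^{1/p}\,w(E(Q))^{1/p'},\qquad Q\in\Sp ,
\]
which I would prove in three moves. First, the definition $[w]_{A_p}=\sup_Q w(Q)\sigma(Q)^{p-1}|Q|^{-p}$ gives $\sigma(Q)w(Q)/|Q|\le[w]_{A_p}\,|Q|^{p-1}\sigma(Q)^{-(p-2)}$. Second, since $|Q|\le 2|E(Q)|$, since $\sigma(E(Q))\le\sigma(Q)$, and since $t\mapsto t^{-(p-2)}$ is nonincreasing (here $p\ge 2$ is used), one gets $|Q|^{p-1}\sigma(Q)^{-(p-2)}\le 2^{p-1}\,|E(Q)|^{p-1}\sigma(E(Q))^{-(p-2)}$. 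Third --- and this is the crux --- applying H\"older's inequality with exponents $p',p$ to the identity $1=\sigma^{1/p'}w^{1/p}$ (valid pointwise because $\sigma^{p-1}w=1$) gives $|E(Q)|\le\sigma(E(Q))^{1/p'}w(E(Q))^{1/p}$; substituting this and simplifying the exponents (they collapse exactly to $1/p$ and $1/p'$) yields the displayed bound. Inserting it into $\mathcal I$ and applying H\"older's inequality to the sum over $Q$,
\[
\mathcal I(f,g)\ \le\ 2^{p-1}[w]_{A_p}\Big(\sum_{Q}\sigma(E(Q))\,(\langle f\rangle^{\sigma}_Q)^p\Big)^{1/p}\Big(\sum_{Q}w(E(Q))\,(\langle g\rangle^{w}_Q)^{p'}\Big)^{1/p'}.
\]
Because $\langle f\rangle^{\sigma}_Q\le M^{\D}_{\sigma}f$ on $Q\supseteq E(Q)$ and the sets $E(Q)$ are pairwise disjoint, the first factor is at most $\|M^{\D}_{\sigma}f\|_{L^p(\sigma)}\le p'\|f\|_{L^p(\sigma)}$ by Theorem~\ref{weightedmax} with $\al=0$; likewise the second is at most $\|M^{\D}_{w}g\|_{L^{p'}(w)}\le p\|g\|_{L^{p'}(w)}$ since $(p')'=p$. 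This produces the constant $2^{p-1}p\,p'=2^{p/p'}p\,p'$, completing the case $p\ge 2$.

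The step I expect to be the main obstacle is precisely the coefficient estimate of the previous paragraph: one must peel off the full factor $[w]_{A_p}$ in such a way that what remains, after using sparseness, is controlled purely by the "building blocks" $\sigma(E(Q))^{1/p}w(E(Q))^{1/p'}$ by H\"older alone. A naive symmetric splitting of $\sigma(Q)w(Q)/|Q|$ into functions of $\sigma(Q)$ and $w(Q)$ does not work, since $\sigma(Q)$ cannot be replaced by $\sigma(E(Q))$ and the resulting sum $\sum_Q\sigma(Q)(\langle f\rangle^{\sigma}_Q)^p$ may diverge; it is essential to route through plain Lebesgue measure first (via the $A_p$ bound) and only afterwards compare with $\sigma$ and $w$ on the disjoint sets $E(Q)$. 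The reduction to $p\ge 2$ is likewise essential, because it is exactly what makes $\sigma(E(Q))^{-(p-2)}\ge\sigma(Q)^{-(p-2)}$ hold.
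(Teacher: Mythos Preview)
Your proof is correct and follows essentially the same approach as the paper's: dualize to a bilinear sum, extract the full factor $[w]_{A_p}$ via the definition so as to leave $|Q|^{p-1}\sigma(Q)^{2-p}$, use $|Q|\le 2|E(Q)|$ and (crucially) $p\ge 2$ to replace $\sigma(Q)^{2-p}$ by $\sigma(E(Q))^{2-p}$, then H\"older on $E(Q)$ to collapse the exponents to $\sigma(E(Q))^{1/p}w(E(Q))^{1/p'}$, finish with discrete H\"older and the universal maximal bounds, and handle $1<p<2$ by the self-adjointness duality $\|T^\Sp\|_{\B(L^p(w))}=\|T^\Sp\|_{\B(L^{p'}(\sigma))}$ together with $[\sigma]_{A_{p'}}=[w]_{A_p}^{p'-1}$. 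The only cosmetic difference is that you package the first three moves as a single coefficient bound $\sigma(Q)w(Q)/|Q|\le 2^{p-1}[w]_{A_p}\sigma(E(Q))^{1/p}w(E(Q))^{1/p'}$, whereas the paper carries the averages through line by line.
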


\begin{proof} Since $T^\Sp$ is a positive operator we may assume $f\geq 0$.  We first consider the case $p\geq 2$, and let $\sigma=w^{1-p'}$.  We will use the well known formulation 
$$\|T^\Sp\|_{\B(L^p(w))}=\|T^\Sp(\,\cdot\,\sigma)\|_{\B(L^p(\sigma),L^p(w))}.$$  
If $g\geq 0$ belongs to $L^{p'}(w)$, then by duality it suffices to estimate 
$$\int_{\R^n}T^\Sp(f\sigma)gw\,dx=\sum_{Q\in \Sp}\dashint_Q f\sigma\,dx\cdot\int_Qgw\,dx.$$
Multiplying and dividing by the precursor to the $A_p$ constant, we have
\begin{align*}
\lefteqn{\sum_{Q\in \Sp}\dashint_Q f\sigma\,dx\cdot\int_Qgw\,dx} \\
&=\sum_{Q\in \Sp}\frac{w(Q)\sigma(Q)^{p-1}}{|Q|^{p}}\frac{|Q|^{p-1}}{w(Q)\sigma(Q)^{p-1}}\int_Q f\sigma\,dx\cdot\int_Qgw\,dx\\
&\leq [w]_{A_p}\sum_{Q\in \Sp}\frac{|Q|^{p-1}}{w(Q)\sigma(Q)^{p-1}}\int_Q f\sigma\,dx\cdot\int_Qgw\,dx\\
&= [w]_{A_p}\sum_{Q\in \Sp}\frac{1}{\sigma(Q)}\int_Q f\sigma\,dx\cdot\frac{1}{w(Q)}\int_Qgw\,dx\cdot|Q|^{p-1}\sigma(Q)^{2-p}\\
&\leq 2^{p-1} [w]_{A_p}\sum_{Q\in \Sp}A_{\sigma}(f,Q)A_w(g,Q)\cdot|E(Q)|^{p-1}\sigma(Q)^{2-p}
\end{align*}
where 
$$A_\sigma(f,Q)=\frac{1}{\sigma(Q)}\int_Qf\sigma\,dx \quad \text{and} \quad A_w(g,Q)=\frac{1}{w(Q)}\int_Qgw\,dx,$$
and in the last inequality we have used $|Q|\leq 2|E(Q)|$.  At this point we have the correct power on the constant $[w]_{A_p}$, so we must estimate the sum in \eqref{est} without using the $A_p$ property of the weight $w$.  
Since $p\geq 2$ and $E(Q)\subset Q$ we have
$$\sigma(Q)^{2-p}\leq \sigma(E(Q))^{2-p},$$
(note: $|E(Q)|\geq|Q|/2>0$ so $\sigma(E(Q))>0$) which in turn yields 
\begin{align}\lefteqn{\int_{\R^n}T^\Sp(f\sigma)gw\,dx}\nonumber\\
&\leq2^{p-1}[w]_{A_p}\sum_{Q\in \Sp}A_{\sigma}(f,Q)A_w(g,Q)\cdot|E(Q)|^{p-1}\sigma(E(Q))^{2-p}.\label{est}
\end{align}
By H\"older's inequality we have
$$|E(Q)|\leq w(E(Q))^{\frac1p}\sigma(E(Q))^{\frac{1}{p'}},$$
so 
\begin{equation}\label{holder}|E(Q)|^{p-1}\sigma(E(Q))^{2-p}\leq \sigma(E(Q))^{\frac1p}w(E(Q))^{\frac{1}{p'}}.\end{equation}
Utilizing inequality \eqref{holder} in the sum in \eqref{est}, followed by a discrete H\"older inequality, followed by the maximal function bounds from Theorem \ref{weightedmax}, we arrive at the desired estimate:
\begin{align*}
\lefteqn{\sum_{Q\in \Sp}A_{\sigma}(f,Q)A_w(g,Q)\cdot |E(Q)|^{p-1}\sigma(E(Q))^{2-p}  }\\
&\leq\sum_{Q\in \Sp}A_{\sigma}(f,Q)A_w(g,Q)\sigma(E(Q))^{\frac1p}w(E(Q))^{\frac{1}{p'}}\\
&\leq \Big(\sum_{Q\in \Sp}A_\sigma(f,Q)^p \sigma(E(Q))\Big)^{\frac{1}{p}} \cdot \Big(\sum_{Q\in \Sp}A_w(g,Q)^{p'} w(E(Q))\Big)^{\frac{1}{p'}}\\
& \leq \|M_\sigma^\D f\|_{L^p(\sigma)}\|M_w^\D g\|_{L^{p'}(w)} \\
&\leq pp' \|f\|_{L^p(\sigma)}\|g\|_{L^{p'}(w)}.
\end{align*}
The case $1<p<2$ follows from duality, since $(T^\Sp)^*=T^\Sp$, we have
$$\|T^\Sp\|_{L^p(w)}=\|T^\Sp\|_{L^{p'}(\sigma)}\leq pp'2^{p'-1} [\sigma]_{A_{p'}}=pp'2^{p'-1}[w]_{A_p}^{\frac{1}{p-1}}.$$
\end{proof}
\begin{theorem} \label{sparsefrac} Suppose $\D$ is a dyadic grid, $\Sp\subset \D$ is a sparse family, $0<\al<n$, $1<p<n/\al$, $\frac1q=\frac1p-\frac{\al}{n}$, $\min\big(\frac{p'}{q},\frac{q}{p'}\big)\leq 1-\frac{\al}{n}$, and $w\in A_{p,q}$.  Then the following estimate holds 

$$\|I_\al^\Sp\|_{\B(L^p(w^p),L^q(w^q))}\leq c_{p,\al}[w]_{A_{p,q}}^{(1-\frac{\al}{n})\max(1,\frac{p'}{q})}$$
where
$$c_{p,\al}=p'\Big(1+\frac{q}{p'}\Big)^{1-\frac{\al}{n}}2^{(1-\frac{\al}{n})\max(\frac{q}{p'},\frac{p'}{q})}.$$
\end{theorem}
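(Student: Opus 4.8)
The argument mirrors that of Theorem~\ref{MainThmCZ}. Since $I^\Sp_\al$ is positive take $f,g\ge0$; put $\sigma=w^{-p'}$ and $u=w^q$, so that $[w]_{A_{p,q}}=\sup_Q u(Q)\sigma(Q)^{q/p'}|Q|^{-1-q/p'}$. As before $\|I^\Sp_\al\|_{\B(L^p(w^p),L^q(w^q))}=\|I^\Sp_\al(\,\cdot\,\sigma)\|_{\B(L^p(\sigma),L^q(w^q))}$, so by $L^q(u)$--$L^{q'}(u)$ duality it suffices to bound, for $0\le g\in L^{q'}(u)$,
$$\int_{\R^n}I^\Sp_\al(f\sigma)\,g\,u\,dx=\sum_{Q\in\Sp}|Q|^{\frac{\al}{n}-1}\sigma(Q)u(Q)\,A_\sigma(f,Q)A_u(g,Q),$$
where $A_\sigma(f,Q)=\frac1{\sigma(Q)}\int_Q f\sigma$ and $A_u(g,Q)=\frac1{u(Q)}\int_Q gu$. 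Throughout we use $\frac1{p'}+\frac1q=1-\frac{\al}{n}$ and $\frac1{q'}=\frac1{p'}+\frac{\al}{n}$.

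\textbf{The case $p'\le q$.} Here $\max(1,p'/q)=1$ and the hypothesis reads $\frac{p'}{q}\le1-\frac{\al}{n}$. Set $\gamma=(1-\frac{\al}{n})\frac{q}{p'}$; the hypothesis gives $\gamma\ge1$. Multiplying and dividing each summand by the $(1-\frac{\al}{n})$-th power of the precursor $u(Q)\sigma(Q)^{q/p'}|Q|^{-1-q/p'}\le[w]_{A_{p,q}}$ and simplifying exponents yields
$$\int_{\R^n}I^\Sp_\al(f\sigma)\,g\,u\,dx\le[w]_{A_{p,q}}^{1-\frac{\al}{n}}\sum_{Q\in\Sp}A_\sigma(f,Q)A_u(g,Q)\,|Q|^{\gamma}\sigma(Q)^{1-\gamma}u(Q)^{\frac{\al}{n}}.$$
Since $\gamma\ge1$ we replace $|Q|^{\gamma}$ by $2^{\gamma}|E(Q)|^{\gamma}$ and $\sigma(Q)^{1-\gamma}$ by $\sigma(E(Q))^{1-\gamma}$ (legitimate as $\sigma(E(Q))>0$, because $|E(Q)|\ge|Q|/2$); the factor $u(Q)^{\al/n}$ must be kept on $Q$, since it is exactly what promotes $A_u(g,Q)$ to the \emph{fractional} maximal function, $A_u(g,Q)u(Q)^{\al/n}=u(Q)^{\frac{\al}{n}-1}\int_Q gu\le M^\D_{\al,u}g(x)$ for $x\in E(Q)$. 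From $\sigma^{q/(p'+q)}u^{p'/(p'+q)}\equiv1$ and H\"older, $|E(Q)|\le\sigma(E(Q))^{q/(p'+q)}u(E(Q))^{p'/(p'+q)}$, and a short computation using $\gamma\frac{p'}{p'+q}=\frac1{p'}$ gives $|E(Q)|^{\gamma}\sigma(E(Q))^{1-\gamma}\le\sigma(E(Q))^{1/p}u(E(Q))^{1/p'}$. Now applying the discrete H\"older inequality with exponents $p,p'$, the pointwise bounds $A_\sigma(f,Q)\le M^\D_\sigma f$ and $A_u(g,Q)u(Q)^{\al/n}\le M^\D_{\al,u}g$ on $E(Q)$ together with the disjointness of $\{E(Q)\}$, and finally Theorem~\ref{weightedmax} (with $\al=0$, giving $\|M^\D_\sigma f\|_{L^p(\sigma)}\le p'\|f\|_{L^p(\sigma)}$, and with exponent $\al$, giving $\|M^\D_{\al,u}g\|_{L^{p'}(u)}\le(1+\frac{q}{p'})^{1-\frac{\al}{n}}\|g\|_{L^{q'}(u)}$ since $\frac1{p'}=\frac1{q'}-\frac{\al}{n}$), we reach
$$\int_{\R^n}I^\Sp_\al(f\sigma)\,g\,u\,dx\le 2^{\gamma}p'\Big(1+\tfrac{q}{p'}\Big)^{1-\frac{\al}{n}}[w]_{A_{p,q}}^{1-\frac{\al}{n}}\|f\|_{L^p(\sigma)}\|g\|_{L^{q'}(u)},$$
which is the desired estimate, since $\gamma=(1-\frac{\al}{n})\max(\frac{q}{p'},\frac{p'}{q})$ when $p'\le q$.

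\textbf{The case $p'>q$.} Since $\langle I^\Sp_\al f,g\rangle=\langle f,I^\Sp_\al g\rangle$ the operator is self-adjoint, so with $v=w^{-1}$,
$$\|I^\Sp_\al\|_{\B(L^p(w^p),L^q(w^q))}=\|I^\Sp_\al\|_{\B(L^{q'}(v^{q'}),L^{p'}(v^{p'}))}.$$
The exponent pair $(q',p')$ satisfies $\frac1{p'}=\frac1{q'}-\frac{\al}{n}$ and, since $p'>q$, it falls under the previous case ($(q')'=q\le p'$); as $[v]_{A_{q',p'}}=[w]_{A_{p,q}}^{p'/q}$, applying that case gives a bound with power $(1-\frac{\al}{n})\frac{p'}{q}=(1-\frac{\al}{n})\max(1,\frac{p'}{q})$ on $[w]_{A_{p,q}}$ and constant at most $c_{p,\al}$.

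The step I expect to be the main obstacle is the handling of the leftover factor $u(Q)^{\al/n}$: unlike $|Q|^{\gamma}$ and $\sigma(Q)^{1-\gamma}$ it cannot be transferred to $E(Q)$, and one must recognize it as precisely the factor that turns $A_u(g,Q)$ into a value of $M^\D_{\al,u}g$, and then verify --- here is where $\frac1{p'}+\frac1q=1-\frac{\al}{n}$ is used --- that the surviving exponents conspire so that the concluding discrete H\"older step runs with the conjugate exponents $p$ and $p'$ and the two resulting sums are exactly $\|M^\D_\sigma f\|_{L^p(\sigma)}$ and $\|M^\D_{\al,u}g\|_{L^{p'}(u)}$, to which Theorem~\ref{weightedmax} applies.
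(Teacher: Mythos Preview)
Your proof is correct and follows essentially the same route as the paper's: the same duality reduction, the same multiply-and-divide by the $(1-\al/n)$-th power of the $A_{p,q}$ precursor, the same use of $\gamma\ge1$ to pass from $Q$ to $E(Q)$, the same H\"older estimate $|E(Q)|\le u(E(Q))^{1/r}\sigma(E(Q))^{1/r'}$ (your $p'/(p'+q)$ and $q/(p'+q)$ are exactly the paper's $1/r$ and $1/r'$ with $r=1+q/p'$), and the same appeal to Theorem~\ref{weightedmax} and duality for the remaining range. The only cosmetic difference is that the paper packages $A_u(g,Q)\,u(Q)^{\al/n}$ as $A_{u,\al}(g,Q)$ from the outset, which is precisely the recombination you flag in your last paragraph.
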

Before we prove Theorem \ref{sparsefrac} we remark that we have a somewhat unnatural assumption: \begin{equation}\label{unnat}\min\Big(\frac{p'}{q},\frac{q}{p'}\Big)\leq 1-\frac{\al}{n}.\end{equation}
Because of this we do not obtain the full range of $p$ and $q$ for the fractional integral operator $I_\al$.  We do not know if these techniques can be extended to the full range.  We do point out, however, that inequality \eqref{unnat} is always satisfied when $\al=0$ since $\min(x,1/x)\leq 1$ for $x>0$.  It is for this reason that we do not encounter this obstacle in Theorem \ref{MainThmCZ}. 
\begin{proof}
 Suppose $\frac{p'}{q}\leq 1-\frac{\al}{n}$.  Let $u=w^q$ and $\sigma=w^{-p'}$, so that
$$[w]_{A_{p,q}}=\sup_Q\frac{u(Q)\sigma(Q)^{\frac{q}{p'}}}{|Q|^{1+\frac{q}{p'}}}=[u]_{A_{1+\frac{q}{p'}}}.$$
Define the exponent $r=1+\frac{q}{p'}$ and notice that $r'=1+\frac{p'}{q}$.  We again use the fact 
$$\|I^\Sp_\al\|_{\B(L^p(w^p),L^q(w^q))}=\|I^\Sp_\al(\,\cdot\, \sigma)\|_{\B(L^p(\sigma), L^q(u))}.$$
For $g\in L^{q'}(u)$, we have
$${\int_{\R^n} I^\Sp_\al( f\sigma)gu\,dx=\sum_{Q\in \Sp} |Q|^{\frac{\al}{n}}\dashint_{Q}f\sigma\,dx\cdot \int_{Q} gu\,dx}.$$
Proceeding as above, we multiply and divide by the precursor to the $A_{p,q}$ constant raised to the power $1-\frac{\al}{n}$:
\begin{align*}
\lefteqn{\sum_{Q\in \Sp}\frac{u(Q)^{1-\frac{\al}{n}}\sigma(Q)^{\frac{q}{p'}(1-\frac{\al}{n})}}{|Q|^{(1+\frac{q}{p'})(1-\frac{\al}{n})}}\frac{|Q|^{\frac{q}{p'}(1-\frac{\al}{n})}}{u(Q)^{1-\frac{\al}{n}}\sigma(Q)^{\frac{q}{p'}(1-\frac{\al}{n})}}\int_{Q}f\sigma\,dx\cdot \int_{Q} gu\,dx}\nonumber\\
&\qquad\qquad\leq[w]_{A_{p,q}}^{1-\frac{\al}{n}}\sum_{Q\in \Sp}\frac{1}{\sigma(Q)}\int_{Q} f\sigma\,dx\cdot \frac{1}{u(Q)^{1-\frac{\al}{n}}} \int_{Q} gu\,dx \nonumber\\
&\qquad \qquad \qquad\ \ \cdot |Q|^{\frac{q}{p'}(1-\frac{\al}{n})} \sigma(Q)^{1-\frac{q}{p'}(1-\frac{\al}{n})}\nonumber\\
&\qquad\qquad =[w]_{A_{p,q}}^{1-\frac{\al}{n}}\sum_{Q\in \Sp}A_\sigma(f,Q) A_{u,\al}(g,Q) \cdot |Q|^{\frac{q}{p'}(1-\frac{\al}{n})} \sigma(Q)^{1-\frac{q}{p'}(1-\frac{\al}{n})}
\end{align*}
where $A_{u,\al}(g,Q)=u(Q)^{\frac{\al}{n}-1}\int_Qgu\,dx.$  Once again, we have the correct power on the constant $[w]_{A_{p,q}}$, and so from this point we will not be able to use the $A_{p,q}$ properties of the weight.  Since $\frac{p'}{q}\leq 1-\frac{\al}{n}$ (this is exactly where we use assumption \eqref{unnat}) and $E(Q)\subset Q$ we have
\begin{align*} \sigma(Q)^{1-\frac{q}{p'}(1-\frac{\al}{n})}&\leq \sigma(E(Q))^{1-\frac{q}{p'}(1-\frac{\al}{n})}
\end{align*}
which, along with $|Q|\leq 2|E(Q)|$ yields the bound
\begin{multline}\label{fractest}\lefteqn{\int_{\R^n} I^\Sp_\al(f\sigma)gu\,dx \leq  2^{\frac{q}{p'}(1-\frac{\al}{n})}[w]_{A_{p,q}}^{1-\frac{\al}{n}}}\\ 
\cdot\sum_{Q\in \Sp}A_\sigma(f,Q) A_{\al,u}(g,Q) \cdot  |E(Q)|^{\frac{q}{p'}(1-\frac{\al}{n})} \sigma(E(Q))^{1-\frac{q}{p'}(1-\frac{\al}{n})}.
\end{multline}
Moreover, by H\"older's inequality with $r$ and $r'$ we have
$$|E(Q)|\leq u(E(Q))^{\frac1r}\sigma(E(Q))^{\frac{1}{r'}}.$$
Now notice that
$$\frac{q}{p'}\Big(1-\frac{\al}{n}\Big)=\frac{q}{p'}\Big(\frac1q+\frac{1}{p'}\Big)=\frac{1}{p'}\Big(1+\frac{q}{p'}\Big)=\frac{r}{p'},$$
which yields
\begin{multline}|E(Q)|^{\frac{q}{p'}(1-\frac{\al}{n})}\sigma(E(Q))^{1-\frac{q}{p'}(1-\frac{\al}{n})}=|E(Q)|^{\frac{r}{p'}}\sigma(E(Q))^{1-\frac{r}{p'}} \\
\leq u(E(Q))^{\frac{1}{p'}}\sigma(E(Q))^{\frac{r}{r'p'}}\sigma(E(Q))^{1-\frac{r'}{p}}=u(E(Q))^{\frac{1}{p'}}\sigma(E(Q))^{\frac{1}{p}}.\label{holderest}\end{multline}
Using inequality \eqref{holderest} to estimate the sum in \eqref{fractest} we have
\begin{align*}\lefteqn{\sum_{Q\in \Sp}A_\sigma(f,Q) A_{\al,u}(g,Q) \cdot|E(Q)|^{\frac{q}{p'}(1-\frac{\al}{n})}\sigma(E(Q))^{1-\frac{q}{p'}(1-\frac{\al}{n})}}\\
&\leq \sum_{Q\in \Sp}A_\sigma(f,Q) A_{\al,u}(g,Q) \cdot\sigma(E(Q))^{\frac{1}{p}} u(E(Q))^{\frac{1}{p'}}\\
&\leq \left(\sum_{Q\in \Sp}A_\sigma(f,Q)^p\sigma(E(Q))\right)^{\frac1p} \cdot \left(\sum_{Q\in \Sp}A_{\al,u}(g,Q)^{p'}u(E(Q))\right)^{\frac{1}{p'}}\\
&\leq \|M_\sigma^\D f\|_{L^p(\sigma)}\|M_{\al,u}^\D g\|_{L^{p'}(u)}\\ 
&\leq p'\Big(1+\frac{q}{p'}\Big)^{1-\frac{\al}{n}}\|f\|_{L^p(\sigma)}\|g\|_{L^{q'}(u)}.
\end{align*} 
Where in the last line we have used Theorem \ref{weightedmax} for the boundedness of $M_{\al,u}^\D$ from $L^{q'}(u)$ to $L^{p'}(u)$ (note: $\frac{1}{p'}=\frac{1}{q'}-\frac{\al}{n}$) and $M_\sigma^\D$ from $L^p(\sigma)$ to $L^p(\sigma)$.  The case $\frac{p'}{q}\geq (1-\frac{\al}{n})^{-1}$ again follows from duality; we omit the details.
\end{proof}

\end{document}